\theoremstyle{plain}
\newtheorem{theorem}{Theorem}[section]
\theoremstyle{definition}
\numberwithin{equation}{section}
\newcommand{\bnum}{\begin{enumerate}}
\newcommand{\enum}{\end{enumerate}}
\begin{document}
\title{Genus of commuting  graphs of some classes of finite rings }

\author{Walaa Nabil Taha  Fasfous  and Rajat Kanti Nath\footnote{Corresponding author} \\
Department of Mathematical Sciences, Tezpur University,\\ Napaam-784028, Sonitpur, Assam, India\\
}
 
\date{Email Addresses:  w.n.fasfous@gmail.com (W.N.T. Fasfous), rajatkantinath@yahoo.com (R.K. Nath)}
\maketitle

\smallskip

\noindent {\small{\textbf{Abstract:} }
In this paper, we compute the genus of commuting graphs of  non-commutative rings of order $p^4$, $p^5$, $p^2q$ and $p^3q$, where $p$ and $q$ are prime integers. We also characterize those  finite rings such that their commuting graphs are planar or toroidal.}

\bigskip

\noindent \small{\textbf{\textit{Key words:}}  Commuting graph,  Genus, planar graph, toroidal graph.}

\noindent \small{\textbf{\textit{2010 Mathematics Subject Classification:}}  16P10, 05C25.}

\section{Introduction} \label{S:intro}
The smallest non-negative integer $n$ such that a graph ${\mathcal{G}}$ can be embedded on the surface obtained by attaching $n$ handles to a sphere is called the genus of ${\mathcal{G}}$.  We write $\gamma({\mathcal{G}})$ to denote the genus of a graph ${\mathcal{G}}$. It is worth mentioning that 
\begin{equation}\label{genus1}
\gamma(K_n) = \left\lceil\frac{(n - 3)(n - 4)}{12}\right\rceil \text{ if } n \geq 3 \text{ (see \cite[Theorem 6-38]{wAT73})}.
\end{equation}
Also, if $\mathcal{G} = \overset{m}{\underset{i = 1}{\sqcup}}K_{n_i}$ then by    \cite[Corollary 2]{bhky62} we have
\begin{equation}\label{genus2}
\gamma({\mathcal{G}}) =  \overset{m}{\underset{i = 1}{\sum}}\gamma(K_{n_i}).
\end{equation}

\noindent A graph ${\mathcal{G}}$ is called planar or toroidal if $\gamma({\mathcal{G}}) = 0$ or $1$, respectively. The commuting graph $\Gamma_c (R)$, of a finite non-commutative ring $R$ with center $Z(R)$,  is an undirected graph whose vertex set is   $R\setminus Z(R)$ and two distinct vertices are adjacent if   they commute. The concept of $\Gamma_c (R)$  was introduced by Akbari, Ghandehari, Hadian and Mohammadian \cite{aghm04} in 2004. However, they studied $\Gamma_c (R)$  for semisimple rings.     
 In \cite{a08, m10} Abdollahi and Mohammadian respectively, considered commuting graphs of some matrix rings and  Omidi   and Vatandoost, in \cite{ov11}, initiated the study of commuting graphs of any finite non-commutative rings.  
  It is noteworthy that $\Gamma_c (R)$ is  less explored compared to the study of  commuting graphs of finite groups which was introduced by  Brauer and Fowler \cite{bF1955} in 1955. In \cite{walaa 0}, Dutta, Fasfous  and Nath have computed  genus of commuting  graphs of some classes of finite rings and characterized non-commutative rings of order $p^2$ and $p^3$ (for any prime) with unity such that their commuting graphs are planar or toroidal.  
In this paper, we consider non-commutative rings of order $p^4$, $p^5$, $p^2q$ and $p^3q$, where $p$ and $q$ are two primes, and  compute the genera  of their commuting graphs. We also characterize those rings such that their commuting graphs are planar or toroidal. It is worth mentioning that the structures of $\Gamma_c (R)$ for the above mentioned classes of rings have been described in \cite{vrb14,vrb16} and their spectral aspects have been explored in \cite{walaa 3}.

\section{ Main results} 
We begin with the following result.

\begin{theorem}\label{order-p4-genus}
Let $R$ be a non-commutative ring with unity and $|R| = p^4$.
\begin{enumerate}
\item Let $|Z(R)| = p$.
\begin{enumerate}
\item[(i)] If $p = 2$ then $\Gamma_c(R)$ is planar, toroidal or $\gamma(\Gamma_c(R))=2$.
\item[(ii)] If $p  \geq 3$ then $\gamma(\Gamma_c(R)) =  (p^2+p+1)\left\lceil\frac{(p^2- p -3)(p^2- p -4)}{12}\right
\rceil$ or \\
$l_1\left\lceil\frac{(p^2- p -3)(p^2- p -4)}{12}\right
\rceil + l_2\left\lceil\frac{(p^3- p -3)(p^3- p -4)}{12}\right\rceil$ for some positive integers $l_1, l_2$ such that $l_1 + l_2(p + 1) = p^2 + p + 1$; and hence $\Gamma_c(R)$ is neither  planar nor toroidal.
\end{enumerate}

\item Let $|Z(R)| = p^2$.
\begin{enumerate}
\item[(i)] $\Gamma_c(R)$ is planar if and only if $p = 2$.
\item[(ii)] If $p  \geq 3$ then
$\gamma(\Gamma_c(R)) = (p+1)\left\lceil\frac{(p^3-p^2 - 3)(p^3-p^2 - 4)}{12}\right\rceil$; and hence $\Gamma_c(R)$ is neither  planar nor toroidal.
\end{enumerate}
\end{enumerate}
\end{theorem}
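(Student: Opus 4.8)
The plan is to prove that $\Gamma_c(R)$ is a disjoint union of complete graphs and then read off the genus from \eqref{genus1} and \eqref{genus2}. The structural input I would establish first is that for every non-central $x$ the centralizer $\cent_R(x) = \{y \in R : xy = yx\}$ is commutative. Indeed, both $Z(R)$ and $x$ lie in $Z(\cent_R(x))$, so the additive quotient $\cent_R(x)/Z(\cent_R(x))$ has order at most $p$ and is therefore cyclic; a short computation shows that any ring whose additive quotient by its centre is cyclic must be commutative. From this it follows that $\cent_R(x)$ is the unique maximal commutative subring of $R$ containing $x$, that two such centralizers coincide as soon as they share a non-central element, and hence that $R \setminus Z(R)$ is partitioned by the sets $M \setminus Z(R)$, where $M$ runs over the maximal commutative subrings containing $Z(R)$. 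Each of these sets induces a clique and no edge runs between distinct cliques, so $\Gamma_c(R) = \bigsqcup_M K_{|M| - |Z(R)|}$.

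Next I would pin down the clique sizes and multiplicities using $Z(R) \subsetneq M \subsetneq R$ and $|R| = p^4$. When $|Z(R)| = p$ the only possibilities are $|M| \in \{p^2, p^3\}$; writing $l_1, l_2$ for the numbers of maximal commutative subrings of these two orders, counting the $p^4 - p$ non-central elements gives $l_1(p^2 - p) + l_2(p^3 - p) = p^4 - p$, and dividing by $p(p-1)$ produces exactly $l_1 + l_2(p+1) = p^2 + p + 1$. When $|Z(R)| = p^2$ the only option is $|M| = p^3$, so every clique is $K_{p^3 - p^2}$ and their number is $(p^4 - p^2)/(p^3 - p^2) = p + 1$, giving $\Gamma_c(R) = (p+1)K_{p^3 - p^2}$.

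The genus then follows mechanically from \eqref{genus1} and \eqref{genus2}. In part (a)(ii) the sub-case $l_2 = 0$ forces $l_1 = p^2 + p + 1$ and yields the first displayed value, while $l_2 \ge 1$ yields the second; in part (b)(ii) summing over the $p+1$ copies of $K_{p^3 - p^2}$ gives the stated formula. For the boundary statements I would just evaluate small cliques: in (a)(i) with $p = 2$ the cliques are $K_2$ (planar) and $K_6$ with $\gamma(K_6) = 1$, so $\gamma(\Gamma_c(R)) = l_2 \in \{0, 1, 2\}$; in (b)(i) the $p = 2$ graph is $3K_4$, which is planar, whereas for $p \ge 3$ one has $\gamma(K_{p^3 - p^2}) \ge \gamma(K_{18}) > 0$. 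The ``neither planar nor toroidal'' assertions for $p \ge 3$ then hold because $\gamma(K_{p^2 - p}) \ge \gamma(K_6) = 1$ and there are at least $p^2 + p + 1 \ge 13$ cliques, forcing $\gamma(\Gamma_c(R)) \ge 2$.

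I expect the main obstacle to be the commutativity of the order-$p^3$ centralizers, since non-commutative rings of order $p^3$ genuinely exist and, if one occurred as some $\cent_R(x)$, the clique decomposition would fail. The ``prime-index centre forces commutativity'' argument sketched above is precisely what rules this out and legitimizes the partition; it is also the point at which the structural classifications of \cite{vrb14, vrb16} can be invoked, both as an independent confirmation of the clique structure and to determine which multiplicities $(l_1, l_2)$ are actually realized.
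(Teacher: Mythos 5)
Your proposal is correct, and it reaches the same decomposition and the same genus arithmetic as the paper, but by a genuinely different route on the structural side. The paper simply cites \cite[Theorem 2.5]{vrb16} to assert that $\Gamma_c(R) = (p^2+p+1)K_{p^2-p}$ or $l_1K_{p^2-p}\sqcup l_2K_{p^3-p}$ with $l_1+l_2(p+1)=p^2+p+1$ when $|Z(R)|=p$, and $\Gamma_c(R)=(p+1)K_{p^3-p^2}$ when $|Z(R)|=p^2$, and then evaluates the genus exactly as you do via \eqref{genus1} and \eqref{genus2}. You instead re-derive the clique decomposition from scratch: since $Z(\cent_R(x))$ contains both $Z(R)$ and $x$, the index of the centre in $\cent_R(x)$ is at most $p$, so each centralizer is commutative, two centralizers sharing a non-central element coincide, and the counting identity $l_1+l_2(p+1)=p^2+p+1$ drops out of $l_1(p^2-p)+l_2(p^3-p)=p^4-p$. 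This buys a self-contained proof that does not lean on \cite{vrb16} (and correctly disposes of the worry that a non-commutative order-$p^3$ subring could appear as a centralizer), at the cost of length; the paper's citation also carries the extra information about which multiplicities are actually realized, which your counting alone does not. One small imprecision: in the mixed case $l_1K_{p^2-p}\sqcup l_2K_{p^3-p}$ the number of cliques is $l_1+l_2$, not $p^2+p+1$, so your closing remark ``at least $p^2+p+1\geq 13$ cliques'' only applies to the unmixed case; the conclusion $\gamma(\Gamma_c(R))\geq 2$ for $p\geq 3$ still holds there because $l_1,l_2\geq 1$ and each component has genus at least $1$ (indeed $\gamma(K_{p^3-p})\geq 35$), matching the paper's bound $\gamma(\Gamma_c(R))\geq l_1+35l_2>36$.
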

\begin{proof}
(a) By  \cite[Theorem 2.5]{vrb16} we have $\Gamma_c(R) = (p^2 + p + 1)K_{p^2 - p}$ or  $l_1K_{p^2 - p} \sqcup  l_2K_{p^3 - p}$, where $l_1 + l_2(p + 1) = p^2 + p + 1$.

\noindent {\bf Case 1:} $\Gamma_c(R) = (p^2 + p + 1)K_{p^2 - p}$

By \eqref{genus2} we have $\gamma(\Gamma_c(R)) = (p^2 + p + 1)\gamma(K_{p^2 - p})$. If $p = 2$ then $p^2 - p = 2$ and so $\gamma(\Gamma_c(R)) = 7 \gamma(K_2) = 0$. Therefore,  $\Gamma_c(R)$ is planar. If $p  \geq 3$ then $p^2 - p \geq 6$. By \eqref{genus1} we have
\[
\gamma(\Gamma_c(R)) = (p^2 + p + 1)\left\lceil\frac{(p^2 - p - 3)(p^2 - p - 4)}{12}\right\rceil.
\]
Since $p  \geq 3$ we have  $\frac{(p^2 - p - 3)(p^2 - p - 4)}{12} \geq \frac{1}{2}$ and so $\gamma(\Gamma_c(R)) \geq 13$.
Thus $\Gamma_c(R)$ is neither  planar nor toroidal. 

\noindent {\bf Case 2:} $\Gamma_c(R) = l_1K_{p^2 - p} \sqcup  l_2K_{p^3 - p}$

By \eqref{genus2} we have $\gamma(\Gamma_c(R)) = l_1\gamma(K_{p^2 - p}) \sqcup  l_2\gamma(K_{p^3 - p})$. If $p = 2$ then $p^2 - p = 2$ and $p^3 - p = 6$. Also, $l_1 + 3l_2 = 7$ which gives $l_1 = 4$ and $l_2 = 1$ or $l_1 = 1$ and $l_2 = 2$. Therefore, $\gamma(\Gamma_c(R)) = 4\gamma(K_2) + \gamma(K_6) = 1$ or $\gamma(\Gamma_c(R)) = \gamma(K_2) + 2\gamma(K_6) = 2$. That is, $\Gamma_c(R)$ is toroidal or $\gamma(\Gamma_c(R))=2$. If $p  \geq 3$ then $p^2 - p \geq 6$ and $p^3 - p \geq 24$. By \eqref{genus1} we have
\[
\gamma(\Gamma_c(R)) = l_1\left\lceil\frac{(p^2- p -3)(p^2- p -4)}{12}\right
\rceil + l_2\left\lceil\frac{(p^3- p -3)(p^3- p -4)}{12}\right\rceil.
\]
Note that $\frac{(p^2- p -3)(p^2- p -4)}{12} \geq \frac{1}{2}$ and $\frac{(p^3- p -3)(p^3- p -4)}{12} \geq 35$. Therefore,  $\gamma(\Gamma_c(R)) \geq l_1 +  35l_2 > 36$. That is, $\Gamma_c(R)$ is neither planar nor toroidal. This completes the proof of part (a).

(b) By \cite[Theorem 2.5]{vrb16} we have $\Gamma_c(R) = (p + 1)K_{p^3 - p^2}$. Therefore, using  \eqref{genus2} we get $\gamma(\Gamma_c(R)) = (p + 1)\gamma(K_{p^3 - p^2})$. If $p = 2$ then $p^3 - p^2 = 4$. Therefore, $\gamma(\Gamma_c(R)) = 3\gamma(K_4) = 0$. That is, $\Gamma_c(R)$ is planar. If $p  \geq 3$ then $p^3 - p^2 \geq 18$ and so by \eqref{genus1} we have 
\[
\gamma(\Gamma_c(R)) = (p+1)\left\lceil\frac{(p^3-p^2 - 3)(p^3-p^2 - 4)}{12}\right\rceil.
\]
Note that $\frac{(p^3-p^2 - 3)(p^3-p^2 - 4)}{12} \geq \frac{35}{2}$ and so  $\gamma(\Gamma_c(R)) \geq 72$. Thus $\Gamma_c(R)$ is neither planar nor toroidal. This completes the proof of part (b). 
\end{proof}

\begin{theorem}\label{order-p5-genus}
Let $R$ be a non-commutative ring with unity where $|R| = p^5$ and $Z(R)$ not a field.
\begin{enumerate}
\item Let $|Z(R)| = p^2$.
\begin{enumerate}
\item[(i)] If $p = 2$ then $\Gamma_c(R)$ is planar, toroidal or $\gamma(\Gamma_c(R))=2$.
\item[(ii)] If $p  \geq 3$ then $\gamma(\Gamma_c(R)) =  (p^2+p+1)\left\lceil\frac{(p^3- p^2 -3)(p^3- p^2 -4)}{12}\right
\rceil$ or \\
$l_1\left\lceil\frac{(p^3- p^2 -3)(p^3- p^2 -4)}{12}\right
\rceil + l_2\left\lceil\frac{(p^3- p -3)(p^3- p -4)}{12}\right\rceil$ for some positive integers $l_1, l_2$ such that $l_1 + l_2(p + 1) = p^2 + p + 1$; and hence $\Gamma_c(R)$ is neither  planar nor toroidal.
\end{enumerate}

\item Let $|Z(R)| = p^3$. Then $\gamma(\Gamma_c(R)) = (p+1)\left\lceil\frac{(p^4-p^3 - 3)(p^4-p^3 - 4)}{12}\right\rceil$; and hence $\Gamma_c(R)$ is neither planar nor toroidal.
\end{enumerate}
\end{theorem}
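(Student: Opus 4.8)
The plan is to follow exactly the template of the proof of Theorem~\ref{order-p4-genus}: first invoke the structural description of $\Gamma_c(R)$ from \cite{vrb16} to express the graph as a disjoint union of complete graphs, then apply \eqref{genus2} to reduce the genus to a sum of genera of complete graphs, and finally substitute \eqref{genus1} and estimate. Since the center is assumed not to be a field, the relevant case of the classification applies verbatim, so the entire argument becomes a transcription of the arithmetic already carried out in the $p^4$ case, with the clique sizes shifted by one power of $p$.

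For part (a) with $|Z(R)| = p^2$, I would quote the relevant case of the structure theorem of \cite{vrb16}, giving $\Gamma_c(R) = (p^2+p+1)K_{p^3-p^2}$ or $\Gamma_c(R) = l_1 K_{p^3-p^2} \sqcup l_2 K_{p^3-p}$ with $l_1 + l_2(p+1) = p^2+p+1$, and treat these as two cases parallel to Cases~1 and~2 there. In Case~1, \eqref{genus2} gives $\gamma(\Gamma_c(R)) = (p^2+p+1)\gamma(K_{p^3-p^2})$; for $p=2$ the clique is $K_4$, which is planar, so the genus is $0$, while for $p \geq 3$ one has $p^3-p^2 \geq 18$ and \eqref{genus1} yields the stated closed form together with a lower bound large enough to force $\gamma(\Gamma_c(R)) > 1$. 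In Case~2, \eqref{genus2} gives the two-term sum of (ii); for $p=2$ the constraint $l_1+3l_2 = 7$ admits only $(l_1,l_2)=(4,1)$ and $(1,2)$, and since $\gamma(K_4)=0$ and $\gamma(K_6)=1$ these yield genus $1$ and $2$ respectively (together with the planar value from Case~1, this gives exactly the three possibilities of (i)); for $p \geq 3$ the summand coming from $K_{p^3-p}$ alone already exceeds $1$, so $\Gamma_c(R)$ is neither planar nor toroidal.

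For part (b) with $|Z(R)| = p^3$, the structure theorem of \cite{vrb16} gives $\Gamma_c(R) = (p+1)K_{p^4-p^3}$, so \eqref{genus2} yields $\gamma(\Gamma_c(R)) = (p+1)\gamma(K_{p^4-p^3})$ at once. Since $p^4-p^3 \geq 8 \geq 3$ for every prime $p$, \eqref{genus1} applies uniformly and produces the stated closed form; moreover $\gamma(K_{p^4-p^3}) \geq \gamma(K_8) = 2$, so $\gamma(\Gamma_c(R)) \geq 6$ and the graph is neither planar nor toroidal, with no case split on $p$ required.

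There is no genuine obstacle internal to the proof: once the classification is cited, everything reduces to the ceiling arithmetic of \eqref{genus1} and the additivity \eqref{genus2}. The only mildly delicate points are the small-prime enumeration in (a)(i)---checking that the admissible pairs $(l_1,l_2)$ produce precisely the genera $0$, $1$, $2$---and the monotonicity estimate confirming that $\tfrac{(p^3-p^2-3)(p^3-p^2-4)}{12}$ and $\tfrac{(p^3-p-3)(p^3-p-4)}{12}$ clear the threshold $1$ for $p \geq 3$; both are the same routine verifications used for Theorem~\ref{order-p4-genus}. The substantive input throughout is the ring-theoretic classification of \cite{vrb16}, which we invoke rather than reprove.
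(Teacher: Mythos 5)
Your proposal is correct and follows essentially the same route as the paper's proof: cite the decomposition of $\Gamma_c(R)$ into disjoint complete graphs from \cite{vrb16}, apply \eqref{genus2}, and then use \eqref{genus1} with the same case split on $p=2$ versus $p\geq 3$, including the same enumeration $(l_1,l_2)\in\{(4,1),(1,2)\}$ in part (a)(i) and the same bound $\gamma(\Gamma_c(R))\geq 6$ in part (b). No discrepancies to note.
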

\begin{proof}
(a) By  \cite[Theorem 2.5]{vrb16} we have $\Gamma_c(R) = (p^2 + p + 1)K_{p^3 - p^2}$ or  $l_1K_{p^3 - p^2} \sqcup  l_2K_{p^3 - p}$, where $l_1 + l_2(p + 1) = p^2 + p + 1$.

\noindent {\bf Case 1:} $\Gamma_c(R) = (p^2 + p + 1)K_{p^3 - p^2}$

By \eqref{genus2} we have $\gamma(\Gamma_c(R)) = (p^2 + p + 1)\gamma(K_{p^3 - p^2})$. If $p = 2$ then $p^3 - p^2 = 4$ and so $\gamma(\Gamma_c(R)) = 7 \gamma(K_4) = 0$. Therefore,  $\Gamma_c(R)$ is planar. If $p  \geq 3$ then $p^3 - p^2 \geq 18$. By \eqref{genus1} we have
\[
\gamma(\Gamma_c(R)) = (p^2 + p + 1)\left\lceil\frac{(p^3 - p^2- 3)(p^3 - p^2 - 4)}{12}\right\rceil.
\]
Since $p  \geq 3$ we have  $\frac{(p^3 - p^2 - 3)(p^3 - p^2 - 4)}{12} \geq \frac{35}{2}$ and so $\gamma(\Gamma_c(R)) \geq 234$.
Thus $\Gamma_c(R)$ is neither  planar nor toroidal. 

\noindent {\bf Case 2:} $\Gamma_c(R) = l_1K_{p^3 - p^2} \sqcup  l_2K_{p^3 - p}$

By \eqref{genus2} we have $\gamma(\Gamma_c(R)) = l_1\gamma(K_{p^3 - p^2}) \sqcup  l_2\gamma(K_{p^3 - p})$. If $p = 2$ then $p^3 - p^2 = 4$ and $p^3 - p = 6$. Also, $l_1 + 3l_2 = 7$ which gives $l_1 = 4$ and $l_2 = 1$ or $l_1 = 1$ and $l_2 = 2$. Therefore, $\gamma(\Gamma_c(R)) = 4\gamma(K_4) + \gamma(K_6) = 1$ or $\gamma(\Gamma_c(R)) = \gamma(K_4) + 2\gamma(K_6) = 2$. That is, $\Gamma_c(R)$ is toroidal or $\gamma(\Gamma_c(R))=2$. If $p  \geq 3$ then $p^3 - p^2 \geq 18$ and $p^3 - p \geq 24$. By \eqref{genus1} we have
\[
\gamma(\Gamma_c(R)) = l_1\left\lceil\frac{(p^3 - p^2 -3)(p^3 - p^2 -4)}{12}\right
\rceil + l_2\left\lceil\frac{(p^3- p -3)(p^3- p -4)}{12}\right\rceil.
\]
Note that $\frac{(p^3 - p^2 -3)(p^3 - p^2 -4)}{12} \geq \frac{35}{2}$ and $\frac{(p^3- p -3)(p^3- p -4)}{12} \geq 35$. Therefore,  $\gamma(\Gamma_c(R)) \geq 18l_1 +  35l_2 > 53$. That is, $\Gamma_c(R)$ is neither planar nor toroidal. This completes the proof of part (a).

(b) By \cite[Theorem 2.5]{vrb16} we have $\Gamma_c(R) = (p + 1)K_{p^4 - p^3}$. By \eqref{genus2} we have $\gamma(\Gamma_c(R)) = (p + 1)\gamma(K_{p^4 - p^3})$.
If $p  \geq 2$ then $p^4 - p^3 \geq 8$ and so by \eqref{genus1} we have 
\[
\gamma(\Gamma_c(R)) = (p+1)\left\lceil\frac{(p^4-p^3 - 3)(p^4-p^3 - 4)}{12}\right\rceil.
\]
Note that $\frac{(p^4-p^3 - 3)(p^4-p^3 - 4)}{12} \geq \frac{5}{3}$ and so  $\gamma(\Gamma_c(R)) \geq 6$. Thus $\Gamma_c(R)$ is neither planar nor toroidal. This completes the proof of part (b). 

\end{proof}

\begin{theorem}\label{order-p^2q-genus}
Let   $R$ be a non-commutative ring where $|R| =p^2q$ and $Z(R) = \{0\}$.
\begin{enumerate}
\item Let $t \in \{p, q, p^2, pq\}$ and $(t - 1) \mid (p^2q - 1)$ .
\begin{enumerate}
\item[(i)] $\Gamma_c(R)$ is planar if $t = p = q =2$; or $t = p^2=4$ and $q \geq 3$; or $t = p=2,3,5$ and $q \geq 3$; or $t = q=2,3,5$ and $p \geq 3$.
\item[(ii)] If $t=p \geq 7$ and $q \geq 3$; or $t=q \geq 7$ and $p \geq 3$; or $p \geq 3$,  $q \geq 3$ and $t=p^2$ or $pq$  then $\gamma(\Gamma_c(R)) =  \frac{p^2q-1}{t-1}\left\lceil\frac{(t-4)(t-5)}{12}\right
\rceil$; and hence $\Gamma_c(R)$ is neither  planar nor toroidal.
\end{enumerate}
\item Let $l_1(p - 1) + l_2(q - 1) + l_3(p^2 - 1) + l_4(pq - 1) = p^2q -1$ for some positive integers $l_1$, $l_2$, $l_3$ and $l_4$.
\begin{enumerate}
\item[(i)] If $p = 2 =q$ then $\Gamma_c(R)$ is planar. 
\item[(ii)] If $p=2$ and  $q=3$ then $\gamma(\Gamma_c(R))= l_4$; and hence $\Gamma_c(R)$ is not  planar but toroidal if and only if $l_4=1$.
\item[(iii)] If $p=2$ and  $q\geq5$ then $\gamma(\Gamma_c(R))= l_2\left\lceil\frac{(q - 4)(q - 5)}{12}\right \rceil +l_4\left\lceil\frac{(2q - 4)(2q - 5)}{12}\right \rceil$.
\item[(iv)] If $q=2$ and  $p=3$ then $\gamma(\Gamma_c(R))= 2l_3 +l_4$.
\item[(v)] If $q=2$ and  $p\geq5$ then $\gamma(\Gamma_c(R))= l_1\left\lceil\frac{(p - 4)(p - 5)}{12}\right \rceil +l_3\left\lceil\frac{(p^2 - 4)(p^2 - 5)}{12}\right \rceil+l_4\left\lceil\frac{(2p - 4)(2p - 5)}{12}\right \rceil$.
\item[(vi)] If $p=3=q$ then $\gamma(\Gamma_c(R))= 2(l_3 +l_4)$.
\item[(vii)] If $p=3$ and  $q\geq5$ then $\gamma(\Gamma_c(R))= l_2\left\lceil\frac{(q - 4)(q - 5)}{12}\right \rceil +2l_3 +l_4\left\lceil\frac{(3q - 4)(3q - 5)}{12}\right \rceil$.
\item[(viii)] If $p\geq5$ and  $q=3$ then $\gamma(\Gamma_c(R))= l_1\left\lceil\frac{(p - 4)(p - 5)}{12}\right \rceil +l_3\left\lceil\frac{(p^2 - 4)(p^2 - 5)}{12}\right \rceil+l_4\left\lceil\frac{(3p - 4)(3p - 5)}{12}\right \rceil$.
\item[(ix)] If $p\geq5$ and  $q\geq5$ then $\gamma(\Gamma_c(R))= l_1\left\lceil\frac{(p - 4)(p - 5)}{12}\right \rceil+ l_2\left\lceil\frac{(q - 4)(q - 5)}{12}\right \rceil +l_3\left\lceil\frac{(p^2 - 4)(p^2 - 5)}{12}\right \rceil+l_4\left\lceil\frac{(pq - 4)(pq - 5)}{12}\right \rceil$.\\
\end{enumerate}
It follows that $\Gamma_c(R)$ is neither  planar nor toroidal in all the cases $(iii) - (ix)$.
\end{enumerate}
\end{theorem}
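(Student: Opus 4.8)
The plan is to reduce the entire computation to the two facts recorded in the introduction: formula \eqref{genus1} for $\gamma(K_n)$ and the additivity \eqref{genus2} of genus over disjoint unions of complete graphs. The only external input I would invoke is the structural description of $\Gamma_c(R)$ for a non-commutative ring of order $p^2q$ with $Z(R)=\{0\}$ (see \cite{vrb14}): it realises $\Gamma_c(R)$ as a disjoint union of complete graphs whose orders lie in $\{p-1,\,q-1,\,p^2-1,\,pq-1\}$, these being the orders of the nontrivial centralizers with their common zero removed. In the homogeneous situation of part (a) this union is $\frac{p^2q-1}{t-1}$ copies of $K_{t-1}$ for a single $t\in\{p,q,p^2,pq\}$ satisfying $(t-1)\mid(p^2q-1)$, and in the mixed situation of part (b) it is $l_1K_{p-1}\sqcup l_2K_{q-1}\sqcup l_3K_{p^2-1}\sqcup l_4K_{pq-1}$; the counting identity $l_1(p-1)+l_2(q-1)+l_3(p^2-1)+l_4(pq-1)=p^2q-1$ is precisely the statement that the components partition the vertex set $R\setminus\{0\}$.

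Granting this, for part (b) I would apply \eqref{genus2} to obtain
\[
\gamma(\Gamma_c(R)) = l_1\gamma(K_{p-1}) + l_2\gamma(K_{q-1}) + l_3\gamma(K_{p^2-1}) + l_4\gamma(K_{pq-1}),
\]
and evaluate each summand by \eqref{genus1}, using the single arithmetic observation that $\gamma(K_n)=0$ precisely when $n\leq 4$ (so $\gamma(K_1)=\gamma(K_2)=\gamma(K_3)=\gamma(K_4)=0$, while $\gamma(K_5)=\gamma(K_6)=\gamma(K_7)=1$ and $\gamma(K_8)=2$). The nine sub-cases are then dictated entirely by which of the four orders fall at or below $4$: one has $p-1\leq 4\iff p\leq 5$ and $q-1\leq 4\iff q\leq 5$, whereas $p^2-1$ and $pq-1$ exceed $4$ as soon as the relevant primes are at least $3$. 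In each sub-case I would drop the vanishing complete-graph terms and read off the surviving ceilings; for example $p=3$ forces $p^2-1=8$ and $\gamma(K_8)=2$, which is the source of the explicit coefficient $2l_3$ in cases (iv), (vi) and (vii), while $p=q=2$ makes every order at most $3$, giving the planar conclusion (i).

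For the final assertion that $\Gamma_c(R)$ is neither planar nor toroidal in cases (iii)--(ix), I would use a crude lower bound rather than the full expression: in each of these cases one of the surviving complete graphs has genus at least $2$---namely $K_{p^2-1}=K_8$ (genus $2$) when $p=3$, $K_{pq-1}=K_{2q-1}$ (order $\geq 9$, genus $\geq 3$) when $p=2$ and $q\geq 5$, and $K_{p^2-1}$ (order $\geq 24$, genus $\geq 35$) whenever $p\geq 5$---and since every $l_i$ is a positive integer, this one summand already forces $\gamma(\Gamma_c(R))\geq 2>1$; these three alternatives between them exhaust (iii)--(ix). Part (a) is handled by the same machinery but is lighter: \eqref{genus2} gives $\gamma(\Gamma_c(R))=\frac{p^2q-1}{t-1}\gamma(K_{t-1})$, which vanishes exactly when $t-1\leq 4$---unwinding to the planar list in (a)(i)---and otherwise equals the displayed value in (a)(ii); non-toroidality there follows because either $t-1\geq 8$ makes $\gamma(K_{t-1})\geq 2$, or else $t=7$, in which case $\gamma(K_6)=1$ but the multiplicity $\frac{p^2q-1}{6}$ vastly exceeds $1$.

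The genuinely non-routine content lives entirely in the imported structure theorem; once that decomposition is fixed, everything else is bookkeeping with \eqref{genus1} and \eqref{genus2}. Accordingly the thing to guard against is not conceptual difficulty but error at the case boundaries: correctly pinning the planar/non-planar transition for a single clique at order $5$, and tracking the small exceptional values $t-1\in\{5,6,7,8\}$. I would pay particular attention to the borderline sub-cases $p=5$ or $q=5$, where a written term such as $K_{p-1}=K_4$ contributes nothing even though its ceiling still appears in the stated formula, so as to keep the displayed expressions and the verbal ``neither planar nor toroidal'' claims mutually consistent.
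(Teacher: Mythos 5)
Your proposal is correct and follows essentially the same route as the paper: it imports the decomposition of $\Gamma_c(R)$ into disjoint complete graphs from \cite[Theorem 2.9]{vrb14}, applies the additivity formula \eqref{genus2} and the genus formula \eqref{genus1}, and resolves the sub-cases by checking which clique orders fall at or below $4$, with the non-planarity/non-toroidality claims secured by a lower bound on a single summand (or on the multiplicity when $t=7$ in part (a)). The paper's proof does exactly this, merely writing out the numerical estimates case by case rather than packaging them into your three-alternative dichotomy.
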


\begin{proof}
(a) By \cite[Theorem 2.9]{vrb14} we have $\Gamma_c(R) =\frac{p^2q - 1}{t - 1}K_{t - 1}$.
By \eqref{genus2} we have $\gamma(\Gamma_c(R)) = \frac{p^2q - 1}{t - 1}\gamma(K_{t - 1})$.\\
\noindent {\bf Case 1:}  $p=2=q$ 

In this case $t=2$, since $(t-1)|(p^2q-1)=7$. Therefore,  $\gamma(\Gamma_c(R)) = 7 \gamma(K_1)=0$. 
That is,  $\Gamma_c(R)$ is planar.

\noindent {\bf Case 2:}  $p=2$ and $q \geq 3$ 

We have $(q-1)\nmid(4q-1)$ and $(2q-1)\nmid(4q-1)$. Therefore, $t\neq q$ and $t\neq pq=2q$.
If $t=p=2$ or $t=p^2=4$ and $q\geq 3$ then $t-1=1$ or $3$ and so $\gamma(\Gamma_c(R)) = 0$. That is,  $\Gamma_c(R)$ is planar.

\noindent {\bf Case 3:}  $p\geq 3$ and $q = 2$

We have $(p-1)\nmid(2p^2-1)$, $(p^2-1)\nmid(2p^2-1)$ and $(2p-1)\nmid(2p^2-1)$. Therefore, $t\neq p$, $p^2$ and $2p=pq$. If $t=q= 2$ then $t-1=1$ and so $\gamma(\Gamma_c(R)) = 0$. That is,  $\Gamma_c(R)$ is planar.

\noindent {\bf Case 4:}  $p\geq 3$ and $q \geq 3$

If $t=p=3$ or $5$ then $t-1=2$ or $4$ and so $\gamma(\Gamma_c(R)) = 0$. That is,  $\Gamma_c(R)$ is planar. 
If $t=p\geq 7$ then $t-1\geq 6$. Therefore, by  \eqref{genus1} we have  
\begin{center}
    $\gamma(\Gamma_c(R)) = \frac{p^2q - 1}{t - 1}\left\lceil\frac{(t- 4)(t-5)}{12}\right\rceil.$
\end{center}
Since  $\frac{(t- 4)(t-5)}{12} \geq \frac{1}{2}$ and $\frac{p^2q - 1}{t - 1} > 2$ we have  $\gamma(\Gamma_c(R)) >2$. That is, $\Gamma_c(R)$ is neither planar nor toroidal.
If $t=p^2$ then $t-1\geq 8$. Therefore, by \eqref{genus1} we have
\begin{center}
    $\gamma(\Gamma_c(R)) = \frac{p^2q - 1}{t - 1}\left\lceil\frac{(t- 4)(t-5)}{12}\right\rceil.$ 
\end{center}
Since  $\frac{(t- 4)(t-5)}{12} \geq \frac{5}{3}$ and $\frac{p^2q - 1}{t - 1} > 2$ we have  $\gamma(\Gamma_c(R)) >4$. That is, $\Gamma_c(R)$ is neither planar nor toroidal.
If $t=q=3$ or $5$ then $t-1=2$ or $4$ and so $\gamma(\Gamma_c(R)) = 0$. That is,  $\Gamma_c(R)$ is planar. 
If $t=q\geq 7$  then $t-1\geq 6$. Therefore, by  \eqref{genus1} we have  
\begin{center}
    $\gamma(\Gamma_c(R)) = \frac{p^2q - 1}{t - 1}\left\lceil\frac{(t- 4)(t-5)}{12}\right\rceil.$
\end{center}
Since  $\frac{(t- 4)(t-5)}{12} \geq \frac{1}{6}$ and $\frac{p^2q - 1}{t - 1} > 2$ we have  $\gamma(\Gamma_c(R)) >2$. That is, $\Gamma_c(R)$ is neither planar nor toroidal. If $t=pq$  then $t-1\geq 8$. Therefore, by  \eqref{genus1} we have  
\begin{center}
    $\gamma(\Gamma_c(R)) = \frac{p^2q - 1}{t - 1}\left\lceil\frac{(t- 4)(t-5)}{12}\right\rceil.$ 
\end{center}
Since  $\frac{(t- 4)(t-5)}{12} \geq \frac{5}{3}$ and $\frac{p^2q - 1}{t - 1} > 2$ we have  $\gamma(\Gamma_c(R)) >4$. That is, $\Gamma_c(R)$ is neither planar nor toroidal.

(b) By \cite[Theorem 2.9]{vrb14} we have $\Gamma_c(R)= l_1K_{p - 1}\sqcup l_2K_{q - 1} \sqcup l_3K_{p^2 - 1} \sqcup l_4K_{pq - 1}$.
By \eqref{genus2} we have $\gamma(\Gamma_c(R)) = l_1 \gamma(K_{p -1})  + l_2 \gamma(K_{q-1})  + l_3  \gamma(K_{p^2 -1})+ l_4 \gamma(K_{pq-1})$.\\
\noindent {\bf Case 1:} $p = 2 = q$ 

 In this case $\gamma(\Gamma_c(R)) = l_1 \gamma(K_1)  + l_2 \gamma(K_1)  + l_3  \gamma(K_3)+ l_4 \gamma(K_3)= 0$. Therefore, $\Gamma_c(R)$ is planar.
 
 \noindent {\bf Case 2:} $p = 2$ and $ q\geq3$ 
 
  In this case $\gamma(\Gamma_c(R)) = l_2 \gamma(K_{q-1})  + l_4 \gamma(K_{2q-1})$. If $q=3$ then $\gamma(\Gamma_c(R)) = l_2 \gamma(K_2)+l_4 \gamma(K_5) = l_4$. Therefore, $\Gamma_c(R)$ is not  planar since $l_4 \neq 0 $  and $\Gamma_c(R)$ is  toroidal if $l_4 =1$. If $q\geq 5$ then $q-1 \geq 4$ and $2q-1\geq 9$. Therefore, by  \eqref{genus1} we have
  \begin{center}
      $\gamma(\Gamma_c(R)) = l_2\left\lceil\frac{(q - 4)(q - 5)}{12}\right \rceil  +l_4\left\lceil\frac{(2q - 4)(2q - 5)}{12}\right \rceil $.
  \end{center}
  Since $\frac{(q-4)(q-5)}{12} \geq 0$ and $\frac{(2q-4)(2q-5)}{12} \geq \frac{5}{2}$ we have $\gamma(\Gamma_c(R)) \geq 3 l_4 \geq 3$. That is, $\Gamma_c(R)$ is neither planar nor toroidal.

\noindent {\bf Case 3:} $q=2$ and $p \geq 3$

In this case $\gamma(\Gamma_c(R)) = l_1 \gamma(K_{p-1}) +l_3 \gamma(K_{p^2-1})  + l_4 \gamma(K_{2p-1})$. If $p=3$ then $\gamma(\Gamma_c(R))= l_1 \gamma(K_{2}) +l_3 \gamma(K_{8})  + l_4 \gamma(K_{5}) = 2l_3 + l_4 \geq 3$. Therefore, $\Gamma_c(R)$ is neither planar nor toroidal. If $p\geq 5$ then $p-1 \geq 4$, $p^2-1\geq 24$ and $2p-1\geq 9$. Therefore, by  \eqref{genus1} we have 
 \begin{center}
      $\gamma(\Gamma_c(R)) = l_1\left\lceil\frac{(p - 4)(p - 5)}{12}\right \rceil + l_3\left\lceil\frac{(p^2 - 4)(p^2 - 5)}{12}\right \rceil +l_4\left\lceil\frac{(2p - 4)(2p - 5)}{12}\right \rceil $.
 \end{center}
Since $\frac{(p-4)(p-5)}{12} \geq 0$, $\frac{(p^2 - 4)(p^2 - 5)}{12} \geq 35$ and $\frac{(2p-4)(2p-5)}{12} \geq \frac{5}{2}$ we have $\gamma(\Gamma_c(R)) \geq 35l_3 + 3l_4 \geq 38$. That is, $\Gamma_c(R)$ is neither planar nor toroidal.

\noindent {\bf Case 4:} $p \geq 3$ and $q \geq 3$

If $p=3=q$ then $\gamma(\Gamma_c(R)) =l_3 \gamma(K_8)  + l_4 \gamma(K_8)= 2(l_3 + l_4) \geq 4$. Therefore, $\Gamma_c(R)$ is neither planar nor toroidal. If $p=3$ and $q\geq 5$ then 
\begin{center}
    $\gamma(\Gamma_c(R))= l_1 \gamma(K_2) + l_2 \gamma(K_{q-1})+ l_3 \gamma(K_8)  + l_4 \gamma(K_{3q-1}) $ 
\end{center}
\begin{center}
   $ ~~~~~~=l_2\left\lceil\frac{(q - 4)(q - 5)}{12}\right \rceil + 2l_3  + l_4 \left\lceil\frac{(3q - 4)(3q - 5)}{12}\right \rceil.$
\end{center}
Since $\frac{(q-4)(q-5)}{12} \geq 0$ and $\frac{(3q - 4)(3q - 5)}{12} \geq \frac{55}{6}$ we have  $\gamma(\Gamma_c(R)) \geq   2l_3 + 10l_4 \geq 12$. That is, $\Gamma_c(R)$ is neither planar nor toroidal. If $q=3$ and $p\geq 5$ then 
\begin{center}
    $\gamma(\Gamma_c(R))= l_1 \gamma(K_{p-1}) + l_2 \gamma(K_2)+ l_3 \gamma(K_{p^2-1})  + l_4 \gamma(K_{3p-1})$ 
\end{center}
\begin{center}
    $~~~~~~~~~~~~~~~~~~~~= l_1 \left\lceil\frac{(p - 4)(p - 5)}{12}\right \rceil + l_3 \left\lceil\frac{(p^2 - 4)(p^2- 5)}{12}\right \rceil + l_4 \left\lceil\frac{(3p - 4)(3p - 5)}{12}\right \rceil.$ 
\end{center}
Since $\frac{(p-4)(p-5)}{12} \geq 0$, $\frac{(p^2-4)(p^2-5)}{12} \geq 35$ and  $\frac{(3q - 4)(3q - 5)}{12} \geq \frac{55}{6}$ we have  $\gamma(\Gamma_c(R)) \geq   35l_3 + 10l_4 \geq 45$. That is, $\Gamma_c(R)$ is neither planar nor toroidal. If $p\geq 5$  and $q\geq 5$ then\\ $\gamma(\Gamma_c(R))= l_1 \left\lceil\frac{(p - 4)(p - 5)}{12}\right \rceil + l_2 \left\lceil\frac{(q - 4)(q - 5)}{12}\right \rceil + l_3 \left\lceil\frac{(p^2 - 4)(p^2- 5)}{12}\right \rceil + l_4 \left\lceil\frac{(pq - 4)(pq - 5)}{12}\right \rceil.$\\ 
Since $\frac{(p-4)(p-5)}{12} \geq 0$, $\frac{(q-4)(q-5)}{12} \geq 0$, $\frac{(p^2-4)(p^2-5)}{12} \geq 35$ and  $\frac{(pq - 4)(pq - 5)}{12} \geq 35$ we have $\gamma(\Gamma_c(R)) \geq   35l_3 + 35l_4 \geq 70$. That is, $\Gamma_c(R)$ is neither planar nor toroidal.
\end{proof}

\begin{theorem}\label{order-p^3 q-1 -genus}
Let $R$ be a non-commutative ring with unity where $ |R|=p^3q$ and $|Z(R)| = pq$. 
\begin{enumerate}
\item If $p = 2 =q$ then $\Gamma_c(R)$ is planar. 
\item If $p=2$ and  $q\geq3$ then $\gamma(\Gamma_c(R))= 3\left\lceil\frac{(2q - 3)(2q - 4)}{12}\right \rceil$.
\item If $q=2$ and  $p\geq3$ then $\gamma(\Gamma_c(R))= (p+1)\left\lceil\frac{(2p^2-2p - 3)(2p^2-2p -4)}{12}\right \rceil$.
\item If $p\geq3$ and  $q\geq3$ then $\gamma(\Gamma_c(R))= (p+1) \left\lceil\frac{(p^2q-pq-3)(p^2q-pq-4)}{12}\right \rceil$.\\
\end{enumerate}
It follows that $\Gamma_c(R)$ is neither  planar nor toroidal in all the cases $(b) - (d)$.
  \end{theorem}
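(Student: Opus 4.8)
The plan is to follow verbatim the template of Theorems~\ref{order-p4-genus}(b) and~\ref{order-p5-genus}(b): first read off the structure of $\Gamma_c(R)$ from the classification of \cite[Theorem 2.5]{vrb16}, then reduce the genus computation to a single complete graph via \eqref{genus2} and evaluate it with \eqref{genus1}. Since $R$ has unity and $|Z(R)| = pq$, I expect \cite[Theorem 2.5]{vrb16} to yield $\Gamma_c(R) = (p+1)K_{p^2q - pq}$. This is exactly the mixed-prime analogue of the two cases just cited, and it is forced by a vertex count: $|R \setminus Z(R)| = p^3q - pq = (p+1)(p^2q-pq)$, with the $p+1$ cliques arising from the $p+1$ maximal centralizers of non-central elements, each of order $p^2q$ and so contributing $p^2q - pq$ non-central vertices.

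Granting this structure, \eqref{genus2} gives $\gamma(\Gamma_c(R)) = (p+1)\gamma(K_n)$ where $n := p^2q - pq = pq(p-1)$, and the remainder is a split on the size $n$. First I would treat the planar case (a): for $p=q=2$ we get $n=4$, so each component is $K_4$ and $\gamma(\Gamma_c(R)) = 3\gamma(K_4) = 0$. In every other case $n \geq 6$, so \eqref{genus1} applies and gives $\gamma(\Gamma_c(R)) = (p+1)\lceil (n-3)(n-4)/12 \rceil$; specializing $n$ then produces the displayed formulas---$p+1 = 3$ and $n = 2q$ in (b); $n = 2p^2 - 2p$ in (c); and $n = p^2q - pq$ in (d).

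To finish cases (b)--(d) I would exhibit a numerical lower bound by evaluating the genus at the smallest admissible primes, exactly as in the earlier theorems: the minimum in (b) is $3\gamma(K_6) = 3$ at $q=3$, in (c) it is $4\gamma(K_{12}) = 24$ at $p=3$, and in (d) it is $4\gamma(K_{18}) = 72$ at $p=q=3$. Since $\gamma(\Gamma_c(R)) > 1$ throughout, neither the value $0$ nor $1$ is attained, so $\Gamma_c(R)$ is neither planar nor toroidal in these cases.

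This argument is essentially mechanical once the structure theorem is invoked, so there is no serious obstacle; the only step requiring care is isolating the boundary case $p=q=2$, where the clique size $n=4$ still lies in the planar range and must be separated before \eqref{genus1} is used. One could alternatively fold it in, since \eqref{genus1} returns $0$ for $K_4$, but separating it matches the style of the preceding proofs.
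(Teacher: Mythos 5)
Your proposal is correct and follows essentially the same route as the paper: decompose $\Gamma_c(R)$ as $(p+1)K_{p^2q-pq}$, apply \eqref{genus2} and \eqref{genus1}, isolate $p=q=2$, and bound the genus below by $3$, $24$, $72$ in the remaining cases. The only discrepancy is the source of the structure theorem: the paper invokes \cite[Theorem 2.12]{vrb14} (the classification for rings of order $p^nq$), not \cite[Theorem 2.5]{vrb16}, and your heuristic vertex-count justification should be replaced by that citation.
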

  \begin{proof}
    By \cite[Theorem 2.12]{vrb14} we have $\Gamma_c(R) =(p+1)K_{p^2q- pq}$.
By \eqref{genus2} we have $\gamma(\Gamma_c(R)) = (p+1)\gamma(K_{p^2q- pq})$.

\noindent {\bf Case 1:} $p = 2 = q$ 

 In this case $\gamma(\Gamma_c(R)) = 3 \gamma(K_4) = 0$. Therefore, $\Gamma_c(R)$ is planar.
 
 \noindent {\bf Case 2:} $p = 2$ and $ q\geq3$ 
 
  In this case we have $\gamma(\Gamma_c(R)) = 3 \gamma(K_{2q})$ and $2q \geq 6$.
  Therefore, by  \eqref{genus1} we have
 \begin{center}
      $\gamma(\Gamma_c(R)) = 3\left\lceil\frac{(2q - 3)(2q - 4)}{12}\right \rceil$.
 \end{center}
Since $\frac{(2q-3)(2q-4)}{12} \geq \frac{1}{2}$ we have  $\gamma(\Gamma_c(R)) \geq 3$. That is, $\Gamma_c(R)$ is neither planar nor toroidal.

\noindent {\bf Case 3:} $q=2$ and $p \geq 3$

In this case we have $\gamma(\Gamma_c(R)) =  (p+1)\gamma(K_{2p^2-2p})$ and $2p^2-2p \geq 12$.
Therefore, by  \eqref{genus1} we have
 \begin{center}
     $\gamma(\Gamma_c(R)) = (p+1)\left\lceil\frac{(2p^2-2p - 3)(2p^2 -2p-4 )}{12}\right \rceil$.
 \end{center}
Since $\frac{(2p^2-2p - 3)(2p^2 -2p-4 )}{12} \geq 6$ and $p+1 \geq 4$ we have $\gamma(\Gamma_c(R)) \geq 24$. That is, $\Gamma_c(R)$ is neither planar nor toroidal.

\noindent {\bf Case 4:} $p \geq 3$ and $q \geq 3$

We have $p^2q -pq \geq 18$.
Therefore, by  \eqref{genus1} we have  
\begin{center}
    $\gamma(\Gamma_c(R)) = (p+1)\left\lceil\frac{(p^2q-pq - 3)(p^2q -pq-4 )}{12}\right \rceil$.
\end{center}
Since $\frac{(p^2q-pq - 3)(p^2q -pq-4 )}{12}\geq \frac{35}{2}$ and $p+1 \geq 4$ we have $\gamma(\Gamma_c(R)) \geq   72$. That is, $\Gamma_c(R)$ is neither planar nor toroidal.
 \end{proof}
   
   \begin{theorem}\label{order-p^3q-2-genus}
Let   $R$ be a non-commutative ring where $|R| =p^3q$ and $|Z(R)| = p^2$.

\begin{enumerate}
\item Let  $(q - 1)\mid (pq - 1)$.
\begin{enumerate}
\item[(i)] If $p = 2 =q$ then $\Gamma_c(R)$ is planar.
\item[(ii)] If $q=2$ and  $p\geq3$ then $\gamma(\Gamma_c(R))= 2p-1\left\lceil\frac{(p^2-3)(p^2-4)}{12}\right \rceil$.
\item[(iii)] If $p\geq3$ and  $q\geq3$ then $\gamma(\Gamma_c(R))= \frac{pq-1}{q-1}\left\lceil\frac{(p^2q-p^2-3)(p^2q-p^2-4)}{12}\right \rceil$.\\
\end{enumerate}
It follows that $\Gamma_c(R)$ is neither  planar nor toroidal in the cases $(ii)$ and $(iii)$.
\item Let  $(p - 1)\mid (pq - 1)$.
\begin{enumerate}
\item[(i)] If $p = 2$ and $ q\geq 2$ then $\Gamma_c(R)$ is planar.
\item[(ii)] If $p\geq3$ and  $q\geq3$ then $\gamma(\Gamma_c(R))= \frac{pq-1}{p-1}\left\lceil\frac{(p^3-p^2-3)(p^3-p^2-4)}{12}\right \rceil$; and hence $\Gamma_c(R)$ is neither  planar nor toroidal. 
\end{enumerate}
\item Let $l_1(p - 1) + l_2(q - 1) = pq -1$, where $l_1$ and $l_2$ are positive integers.
\begin{enumerate}
\item[(i)] If $p = 2 =q$ then $\Gamma_c(R)$ is planar.
\item[(ii)] If $p=2$ and  $q\geq3$ then $\gamma(\Gamma_c(R))= l_2\left\lceil\frac{(4q - 7)(4q - 8)}{12}\right \rceil$.
\item[(iii)] If $q=2$ and  $p\geq3$ then $\gamma(\Gamma_c(R))= l_1\left\lceil\frac{(p^3-p^2 - 3)(p^3-p^2-4)}{12}\right \rceil +l_2\left\lceil\frac{(p^2 - 3)(p^2 -4)}{12}\right \rceil$.
\item[(iv)] If $p\geq3$ and  $q\geq3$ then\\ $\gamma(\Gamma_c(R))= l_1\left\lceil\frac{(p^3-p^2 - 3)(p^3-p^2-4)}{12}\right \rceil +l_2\left\lceil\frac{(p^2q-p^2 - 3)(p^2q-p^2 -4)}{12}\right \rceil$.
\end{enumerate}
It follows that $\Gamma_c(R)$ is neither  planar nor toroidal in  all the cases $(ii) - (iv)$.
\end{enumerate}
\end{theorem}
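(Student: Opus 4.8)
The plan is to follow the template established by the preceding theorems: invoke the structural classification of $\Gamma_c(R)$ as a disjoint union of complete graphs, reduce the genus computation to a sum of $\gamma(K_n)$ via \eqref{genus2}, and then run a case analysis on the values of $p$ and $q$, evaluating or bounding each $\gamma(K_n)$ through \eqref{genus1}. Nothing here is conceptually new relative to Theorems \ref{order-p4-genus}--\ref{order-p^3 q-1 -genus}; the content is entirely in matching each divisibility hypothesis to the correct decomposition and then chasing the small-prime thresholds.

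First I would record the decomposition of $\Gamma_c(R)$ supplied by the structure theorem of \cite[Theorem 2.12]{vrb14} for $|R| = p^3q$ and $|Z(R)| = p^2$. The three hypotheses of the statement correspond exactly to the three possible shapes of this decomposition: when $(q-1)\mid(pq-1)$ the graph is $\frac{pq-1}{q-1}K_{p^2(q-1)}$; when $(p-1)\mid(pq-1)$ it is $\frac{pq-1}{p-1}K_{p^2(p-1)}$; and in the general case it is $l_1 K_{p^2(p-1)}\sqcup l_2 K_{p^2(q-1)}$ with $l_1(p-1)+l_2(q-1)=pq-1$. As a sanity check I would confirm that the component sizes account for all of $R\setminus Z(R)$: in each regime the total is $p^2(pq-1)=p^3q-p^2=|R|-|Z(R)|$, which simultaneously pins down the block sizes $p^2(p-1)$ and $p^2(q-1)$ and hence the exponents $p^3-p^2$, $p^2$, $p^2q-p^2$, $4q-4$ appearing in the closed forms.

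Applying \eqref{genus2} to pass to sums of $\gamma(K_n)$, I would then split each part into the subcases dictated by whether each of $p$ and $q$ equals $2$ or is at least $3$. In the planar subcases the relevant blocks are $K_4$ (arising when $p^2(p-1)=4$ or $p^2(q-1)=4$), which has genus $0$, giving $\gamma(\Gamma_c(R))=0$. In every remaining subcase I would substitute into \eqref{genus1} to obtain the stated formula and then bound $\gamma(\Gamma_c(R))$ from below: bounding each $\frac{(n-3)(n-4)}{12}$ below (for example $n=p^2\geq 9$ forces $\frac{(n-3)(n-4)}{12}\geq\frac{5}{2}$, so $\gamma(K_n)\geq 3$; $n=p^2(q-1)\geq 18$ forces $\gamma(K_n)\geq 18$) and using that each multiplicity $l_i$ and each component count is a positive integer yields $\gamma(\Gamma_c(R))\geq 2>1$, so the graph is neither planar nor toroidal.

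The things to watch are the divisibility bookkeeping and the boundary subcases. In part (b) the hypothesis $(p-1)\mid(pq-1)$ together with $q=2$ forces $(p-1)\mid(2p-1)=2(p-1)+1$, hence $p=2$; this is precisely why no ``$p\geq 3$, $q=2$'' subcase appears there, and I would flag this exclusion explicitly. The only mildly delicate estimates are the mixed subcases such as (c)(ii), where one block $K_4$ contributes genus $0$ and the whole genus is carried by the $K_{4q-4}$ block: there one must observe $l_2\geq 1$ and $\gamma(K_{4q-4})=\left\lceil\frac{(4q-7)(4q-8)}{12}\right\rceil\geq 2$ for $q\geq 3$ to conclude non-toroidality. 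Keeping the three structural regimes cleanly separated, and identifying the exact prime thresholds at which $\gamma(K_n)$ first becomes positive, is the main (and only real) source of care.
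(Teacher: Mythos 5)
Your proposal is correct and follows essentially the same route as the paper's proof: the same decomposition of $\Gamma_c(R)$ from \cite[Theorem 2.12]{vrb14} in each of the three divisibility regimes, reduction via \eqref{genus2}, case analysis on $p,q\in\{2\}$ versus $\geq 3$, and the same lower bounds from \eqref{genus1} (including the exclusion of the $q=2$, $p\geq 3$ subcase in part (b), which the paper also notes, along with the symmetric exclusion of $p=2$, $q\geq 3$ in part (a)). The only addition is your sanity check that the block sizes sum to $|R|-|Z(R)|$, which is harmless but not a different argument.
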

\begin{proof}
(a) By \cite[Theorem 2.12]{vrb14} we have $\Gamma_c(R) =\frac{pq - 1}{q - 1}K_{p^2q- p^2}$.
By \eqref{genus2} we have $\gamma(\Gamma_c(R)) = \frac{pq - 1}{q - 1}\gamma(K_{p^2q- p^2})$. We shall complete the proof by considering the following cases. Note that the case $p=2$ and $q\geq3$ does not arise since $(q-1)\nmid(2q-1)$.

\noindent {\bf Case 1:}  $p=2=q$ 

In this case we have  $\Gamma_c(R) = 3 K_4$ and so $\gamma(\Gamma_c(R)) = 0$. That is,  $\Gamma_c(R)$ is planar.

\noindent {\bf Case 2:} $q=2$ and $p \geq 3$

In this case $\gamma(\Gamma_c(R)) =  (2p-1)\gamma(K_{p^2})$ where $p^2 \geq 9$. 
Therefore, by  \eqref{genus1} we have  
\begin{center}
  $\gamma(\Gamma_c(R)) = (2p-1)\left\lceil\frac{(p^2 - 3)(p^2-4 )}{12}\right \rceil$.  
\end{center}
Since $\frac{(p^2 - 3)(p^2-4 )}{12} \geq \frac{5}{2}$ and $2p-1 \geq 5$ we have  $\gamma(\Gamma_c(R)) > 15 $. That is, $\Gamma_c(R)$ is neither planar nor toroidal.

\noindent {\bf Case 3:} $p \geq 3$ and $q \geq 3$

We have $p^2q-p^2 \geq 18 $.
Therefore, by  \eqref{genus1} we have  
\begin{center}
    $\gamma(\Gamma_c(R)) = \frac{pq-1}{q-1}\left\lceil\frac{(p^2q-p^2 - 3)(p^2q-p^2-4 )}{12}\right \rceil$.
\end{center}
Since $\frac{(p^2q-p^2 - 3)(p^2q -p^2-4 )}{12} \geq \frac{35}{2}$ and $\frac{pq-1}{q-1} > 2$ we have  $\gamma(\Gamma_c(R)) > 36$. That is, $\Gamma_c(R)$ is neither planar nor toroidal.

(b) By \cite[Theorem 2.12]{vrb14} we have $\Gamma_c(R) =\frac{pq - 1}{p - 1}K_{p^3- p^2}$.
By \eqref{genus2} we have $\gamma(\Gamma_c(R)) = \frac{pq - 1}{p - 1}\gamma(K_{p^3- p^2})$. We shall complete the proof by considering the following cases. Note that the case $q=2$ and $p\geq3$ does not arise since $(p-1)\nmid(2p-1)$.



\noindent {\bf Case 1:} $p=2$ and $q \geq 2$

In this case we have $\gamma(\Gamma_c(R)) = (2q-1)\gamma(K_{4})=0$. That is,  $\Gamma_c(R)$ is planar. 



\noindent {\bf Case 2:} $p \geq 3$ and $q \geq 3$

We have $p^3-p^2 \geq 18$.
Therefore, by  \eqref{genus1} we have  
\begin{center}
    $\gamma(\Gamma_c(R)) = \frac{pq-1}{p-1}\left\lceil\frac{(p^3-p^2 - 3)(p^3-p^2-4 )}{12}\right \rceil$.
\end{center}
Since $\frac{(p^3-p^2 - 3)(p^3-p^2-4 )}{12} \geq \frac{35}{2}$ and $\frac{pq-1}{p-1} > 2$ we have  $\gamma(\Gamma_c(R)) >  36$. That is, $\Gamma_c(R)$ is neither planar nor toroidal.

(c) By \cite[Theorem 2.12]{vrb14} we have $
\Gamma_c(R)= l_1K_{p^3- p^2}\sqcup l_2K_{p^2q- p^2}$.
By \eqref{genus2} we have $\gamma(\Gamma_c(R)) = l_1\gamma(K_{p^3- p^2}) +  l_2\gamma(K_{p^2q- p^2})$.

\noindent {\bf Case 1:} $p = 2 = q$ 

 In this case $\gamma(\Gamma_c(R)) = l_1 \gamma(K_4)  + l_2 \gamma(K_4) = 0$. Therefore, $\Gamma_c(R)$ is planar.
 
 \noindent {\bf Case 2:} $p = 2$ and $ q\geq3$ 
 
  In this case $\gamma(\Gamma_c(R)) = l_1 \gamma(K_{4})  + l_2 \gamma(K_{4q-4})=l_2\gamma(K_{4q-4})$. We have $4q-4 \geq 8$.
  Therefore, by  \eqref{genus1} we have \\
  \begin{center}
      $\gamma(\Gamma_c(R)) = l_2\left\lceil\frac{(4q-7)(4q-8)}{12}\right \rceil$.
  \end{center}
Since $\frac{(4q-7)(4q-8)}{12} \geq \frac{5}{3}$ we have $\gamma(\Gamma_c(R)) \geq 2l_2 \geq 2$. That is, $\Gamma_c(R)$ is neither planar nor toroidal.

\noindent {\bf Case 3:} $q=2$ and $p \geq 3$

In this case $\gamma(\Gamma_c(R)) = l_1 \gamma(K_{p^3-p^2})  + l_2 \gamma(K_{p^2})$. 
Since $p^3-p^2 \geq 18$ and $p^2 \geq 9$, by  \eqref{genus1} we have\\
\begin{center}
     $\gamma(\Gamma_c(R)) = l_1\left\lceil\frac{(p^3-p^2-3)(p^3-p^2-4)}{12}\right \rceil + l_2\left\lceil\frac{(p^2 - 3)(p^2 - 4)}{12}\right \rceil$.
\end{center}
Since $\frac{(p^3-p^2-3)(p^3-p^2-4)}{12} \geq \frac{35}{2}$ and $\frac{(p^2 - 3)(p^2 - 4)}{12} \geq \frac{5}{2}$ we have  $\gamma(\Gamma_c(R)) \geq 18l_1 + 3l_2 \geq 21$. That is, $\Gamma_c(R)$ is neither planar nor toroidal.

\noindent {\bf Case 4:} $p \geq 3$ and $q \geq 3$

We have $p^3-p^2 \geq 18$ and $p^2q -p^2 \geq 18$.
Therefore, by  \eqref{genus1} we have 
\begin{center}
      $\gamma(\Gamma_c(R))= l_1 \left\lceil\frac{(p^3-p^2 - 3)(p^3-p^2-4)}{12}\right \rceil + l_2 \left\lceil\frac{(p^2q- p^2-3 )(p^2q- p^2-4)}{12}\right\rceil.$ 
 \end{center}
Since $\frac{(p^3-p^2 - 3)(p^3-p^2-4)}{12} \geq \frac{35}{2}$ and $\frac{(p^2q- p^2-3 )(p^2q- p^2-4)}{12} \geq \frac{35}{2}$ we have $\gamma(\Gamma_c(R)) \geq   18l_1 + 18l_2 \geq 36$. That is, $\Gamma_c(R)$ is neither planar nor toroidal.

\end{proof}
 
\end{document}